\newtheorem{theorem}{Theorem}[section]
\newtheorem{lemma}[theorem]{Lemma}
\newtheorem{corollary}[theorem]{Corollary}
\theoremstyle{definition}
\newtheorem{definition}[theorem]{Definition}
\newtheorem{remark}[theorem]{Remark}
\numberwithin{equation}{section}
\begin{document}

\title[canonical degrees of Gorenstein threefolds]
{On the canonical degrees of Gorenstein threefolds of general type}

\author[Rong Du]{Rong Du$^{\dag}$}
\address{Department of Mathematics\\
Shanghai Key Laboratory of PMMP\\
East China Normal University\\
Rm. 312, Math. Bldg, No. 500, Dongchuan Road\\
Shanghai, 200241, P. R. China} \email{rdu@math.ecnu.edu.cn}

\author[Yun Gao]{Yun Gao$^{\dag\dag}$}

\address{Department of Mathematics, Shanghai Jiao Tong University,
Shanghai 200240, P. R. of China}
\email{gaoyunmath@sjtu.edu.cn}

\thanks{$^{\dag}$ The Research Sponsored by the National Natural Science Foundation of China (Grant No. 11471116), Science and Technology Commission of Shanghai Municipality (Grant No. 13dz2260400) and Shanghai Pujiang Program (Grant No. 12PJ1402400).}
\thanks{$^{\dag\dag}$The Research Sponsored by the National Natural Science Foundation of China (Grant No. 11271250,11271251) and SMC program of Shanghai Jiao Tong University.}
\thanks{Both authors are supported by China NSF (Grant No. 11531007)}

 \maketitle

 \begin{abstract}{Let $X$ be a Gorenstein minimal projective $3$-fold with at worst locally factorial terminal singularities. Suppose that the canonical map is generically finite onto its image. C. Hacon showed that the canonical degree is universally bounded by $576$. We improved Hacon's universal bound to $360$. Moreover, we gave all the possible canonical degrees of $X$ if $X$ is an abelian cover over $\mathbb{P}^3$ and constructed all the examples with these canonical degrees.  }\end{abstract}

\section{Introduction}\label{secintro}
The study of the canonical maps of projective varieties of general type is one of the central problems in algebraic geometry. For the case of surfaces, Persson (\cite{Per}) constructed a surface of general type with canonical degree $16$ in 1978. About the same time,
Beauville (\cite{Bea}) proved that the degree of the canonical map is less than or equal to $36$ and with the equality holds if and only if $X$ is a ball quotient surface with $K_X^2=36$,
$p_g=3$, $q=0$, and $|K_X|$ is base point free. Later, Xiao also found some restrictions on surfaces with high
canonical degrees (\cite{Xiao}). Since
the canonical degree is bounded above, next interesting question is to determine
which positive integers $d$'s occur as the degrees of the canonical
map. There are plenty of examples
(see \cite{Bea}, \cite{Cat1}, \cite{V-Z} ) with canonical degrees
being $2$. For $d=3$ and $d=5$, Tan (\cite{Tan}) and Pardini
(\cite{Par1}) constructed several surfaces independently. When
$p_g(\Sigma)=0$, Beauville (\cite{Bea}) constructed surfaces with
$\chi(\mathcal{O}_X)$ arbitrarily large and the canonical degrees
$2,4,6$ and $8$ . For $d=9$, Tan also constructed a surface in
\cite{Tan}. Later, Casnati (\cite{Cas})constructed surfaces of canonical degree from $3$ to $9$ as subvarieties in some projective bundle given by Pfaffans of alternating matrices. The authors (\cite{D-G}) classified the surfaces whose canonical maps are abelian covers over $\mathbb{P}^2$ and constructed these surfaces by explicit defining equations. Recently, Rito in his series papers (\cite{Rit},\cite{Rit2},\cite{Rit3}) constructed some new surfaces of general type with canonical degrees $12$, $16$ and $24$ respectively.

In dimension at least three, the situation seems much less clear. M. Chen studied the canonical map of fiber type (\cite{Ch},\cite{C-H}) and posted an open problem in \cite{Ch} as follows: Let $X$ be a Gorenstein minimal projective $3$-fold with at worst locally factorial terminal singularities. Suppose that the canonical map is generically finite onto its image. Is the generic degree of the canonical map universally upper bounded? Hacon gave a positive answer to Chen's problem. More precisely, he showed that the canonical degree is at most $576$.

In this paper, we improve Hacon's upper bound by showing the following main theorem.
\begin{theorem}\label{maint}
Let $X$ be a Gorenstein minimal complex projective $3$-fold of
general type with locally factorial terminal singularities. Suppose
that $|K_X|$ defines a generically finite map $\phi_X:
X\dashrightarrow \mathbb{P}^{p_g-1}$, then $deg\ \phi_X\le 360$ and with the equality holds if and only if $p_g(X)=4$, $q(X)=2$, $\chi(\omega_X)=5$, $K^3=360$ and $|K_X|$ is base point free.
\end{theorem}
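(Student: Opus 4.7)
The plan is to refine Hacon's approach by combining the standard degree inequality for the canonical map with a careful application of Miyaoka's inequality for Gorenstein terminal threefolds and an analysis of the small-$p_g$ cases. After resolving the indeterminacy of $\phi_X$ by some $\pi:Y\to X$ and writing $\pi^*K_X=M+Z$ with $M$ the (eventually base-point-free) moving part and $Z\ge 0$, the nef-ness of $M$ and $\pi^*K_X$ yields
\[
\deg(\phi_X)\cdot\deg\phi_X(X)=M^3\le(\pi^*K_X)^3=K_X^3.
\]
Since $\phi_X$ is generically finite, $\Sigma:=\phi_X(X)\subset\mathbb{P}^{p_g-1}$ is a non-degenerate $3$-fold, which forces $p_g\ge 4$ and $\deg\Sigma\ge p_g-3$, giving
\[
\deg(\phi_X)\le\frac{K_X^3}{\max(1,p_g-3)}.
\]

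For the bound on $K_X^3$, one observes that Gorenstein terminal threefolds have cDV singularities, so Reid's Riemann--Roch has no basket contribution and reduces to $K_X\cdot c_2(X)=24\chi(\omega_X)$; Miyaoka's inequality $K_X^3\le 3K_X\cdot c_2(X)$ then yields
\[
K_X^3\le 72\,\chi(\omega_X),
\]
with equality precisely when $X$ is a ball quotient. The proof then splits into cases on $p_g$. For $p_g\ge 5$ one has $\deg\Sigma\ge 2$, and one combines the above with upper bounds on $\chi(\omega_X)$ in terms of $p_g$ (obtained via Serre duality and the formula $\chi(\omega_X)=p_g+q(X)-h^2(\mathcal{O}_X)-1$, together with irregularity bounds for threefolds whose canonical map is generically finite) to conclude $\deg(\phi_X)<360$ strictly.

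The sharp case is $p_g=4$: here $\Sigma=\mathbb{P}^3$, $\deg\Sigma=1$, and the estimate reduces to $\deg(\phi_X)\le K_X^3\le 72\chi(\omega_X)$. The crucial step, and the main obstacle of the proof, is to show that $\chi(\omega_X)\le 5$, equivalently $q(X)\le 2+h^2(\mathcal{O}_X)$. The idea is to analyse the Albanese morphism $X\to\mathrm{Alb}(X)$ and its interaction with the generically finite map $\phi_X:X\dashrightarrow\mathbb{P}^3$, then to invoke sub-additivity of $p_g$ along the resulting Iitaka-type fibration, in order to rule out configurations with too-large irregularity compatible with a generically finite canonical map onto $\mathbb{P}^3$.

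Finally, for the equality case $\deg(\phi_X)=360$, the whole chain of inequalities must saturate: $p_g=4$, $M^3=K_X^3=360$, $Z=0$ (so $|K_X|$ is base-point-free, obtained by descending the base-freeness of $|M|$ via $\pi$), and $K_X^3=72\chi(\omega_X)$ with $\chi(\omega_X)=5$. The Miyaoka equality identifies $X$ with a ball quotient (in particular smooth with $K_X$ ample), and the remaining invariants $q(X)=2$ and $h^2(\mathcal{O}_X)=0$ fall out from the formula $\chi(\omega_X)=p_g+q(X)-h^2(\mathcal{O}_X)-1=5$ together with the previous irregularity analysis, giving the stated characterization.
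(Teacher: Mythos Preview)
Your proposal follows the same overall strategy as the paper --- combine $d(p_g-3)\le K_X^3\le 72\chi(\omega_X)$ with a bound of the form $\chi(\omega_X)\le p_g+1$ --- but the crucial step is only sketched, and the paper's execution is more specific than what you indicate. The paper does not split on $p_g$; it splits on $q(X)$. When $q\le 2$ the inequality $\chi(\omega_X)=p_g+q-h^2(\mathcal O_X)-1\le p_g+1$ is immediate. When $q\ge 3$, the paper invokes Hacon's results on the Stein factorization $f:X\to Y$ of the Albanese map: if $\dim Y\ge 2$ one has $\chi(\omega_X)\le p_g$, while if $\dim Y=1$ one has $\chi(\omega_X)\le p_g+\chi(\omega_Y)$ together with $\chi(\omega_Y)\,p_g(F)\le p_g$ for the general fibre $F$. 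The extra input your proposal does not isolate is the elementary observation that generic finiteness of $\phi_X$ forces $p_g(F)\ge 3$ (otherwise $\dim\phi_X(F)\le 1$ and $\phi_X$ could not dominate a threefold); this yields $p_g\ge(q-1)p_g(F)\ge 6$ and hence $d\le 192$ in that branch. Your phrase ``sub-additivity of $p_g$ along the Iitaka-type fibration'' points in the right direction but does not by itself produce these quantitative bounds, and your separate treatment of $p_g\ge 5$ is left entirely unspecified.

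One correction on the equality case: the paper does \emph{not} conclude that $X$ is a ball quotient when $d=360$. The theorem only records the numerical data $p_g=4$, $q=2$, $\chi(\omega_X)=5$, $K_X^3=360$ and base-point-freeness of $|K_X|$. The ball-quotient identification from Miyaoka--Yau equality requires in addition that $X$ be smooth with $K_X$ ample, which is not part of the hypothesis; the paper explicitly leaves the existence of such an $X$ as a conjecture in a remark following the proof.
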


Since  the canonical degree is bounded above, it is quite interesting to consider a parallel problem as surfaces that which positive integers $d$'s occur as the degrees of the canonical map of Gorenstein minimal projective $3$-fold. As far as we know, there are quite few examples about $3$-fold of general type with higher canonical degree. Cai (\cite{Cai}) constructed some examples of $3$-fold with canonical degrees $32$ and $64$ based on the existence of the surface with canonical degree $16$ which was constructed by Persson.
We show that if the canonical map is an abelian cover
over $\mathbb{P}^3$ then the only possible canonical degrees of a
Gorenstein minimal projective $3$-fold are $2^m$ $(1\leqslant m \leqslant 5)$,
by explicit constructions.

%

\section{Proof of the main theorem}
Let $X$ be a Gorenstein minimal complex projective $3$-fold of
general type with locally factorial terminal singularities. Suppose
that $|K_X|$ defines a generically finite map $\phi_X:
X\dashrightarrow \mathbb{P}^{p_g-1}$. We will base on Hacon's beautiful arguments to improve the universal upper bound of the canonical degree.

\begin{proof}
Since $\phi_X$ is generically finite, one has that $p_g(X)\ge 4$. Let $d=deg\ \phi_X$. By the Miyaoka-Yau inequality (\cite{Mi}), we have
\[d(p_g(X)-3)\le K_X^3\le 72\chi(\omega_X).\]
If we can show $\chi(\omega_X)\le p_g(X)+1$, then
\begin{equation}\label{star}
d\le 72\frac{\chi(\omega_X)}{p_g(X)-3}\le 72\frac{p_g(X)+1}{p_g(X)-3}=72(1+\frac{4}{p_g(X)-3})\le360.
\end{equation}
If $q(X)\le2$, then $\chi(\omega_X)\le p_g(X)+q(X)-1\le p_g(X)+1$.

Now we can assume hereafter that $q(X)\ge 3$. Consider the Albanese map $alb_X$ of $X$ and the Stein factorization $f$ of $alb_X$ as follows:

\[\xymatrix{
X \ar[r]^{f}\ar[dr]_{alb_X}
& Y \ar[d]\\
& Alb(X)
}.\]

By Hacon's argument (see the proof of \cite{Ha}, Theorem 1.1), one has
\begin{enumerate}
\item [(1)] $\chi(\omega_X)\le p_g(X)$, if dim$Y\ge 2$;
\item [(2)] $\chi(\omega_X)\le p_g(X)+\chi(\omega_Y)$ and $\chi(\omega_Y)p_g(F)\le p_g(X)$, where $F$ is the general fiber of $f$, if dim$Y=1$.
\end{enumerate}
Hence if dim$Y\ge 2$, by (\ref{star}), the statement holds. More precisely,
\[d\le 72\frac{p_g(X)}{p_g(X)-3}\le 288.\]

We only need to consider dim$Y=1$.

If $p_g(F)\le$ dim$X-1$, then
\[h^0(\mathcal{O}_X(K_X)\otimes\mathcal{O}_F)\le h^0(\mathcal{O}_X(K_X+F)\otimes\mathcal{O}_F)=h^0(\mathcal{O}_F(K_F))\le\text{dim} X-1,\]
which means that dim Im$(\phi_X|_F)\le$ dim $X-2$, and hence dim Im$\phi_X\le$ dim $X-1$, which contradicts the assumption that $\phi_X$ is generically finite. So we have that $p_g(F)\ge$ dim$X=3$ and then $p_g(X)\ge\chi(\omega_Y)p_g(F)=(q(X)-1)p_g(F)\ge 6$.

Therefore
\[d\le 72\frac{\chi(\omega_X)}{p_g(X)-3}\le 72\frac{p_g(X)+\chi(\omega_Y)}{p_g(X)-3}\le 72(1+\frac{1}{p_g(F)})\frac{p_g(X)}{p_g(X)-3}\le192.\]

From the argument above, we know that the equality of (\ref{star}) holds if and only if $p_g(X)=4$, $q(X)=2$, $\chi(\omega_X)=5$, $K_X^3=360$ and $|K_X|$ is base point free.
\end{proof}
\begin{remark}
If $X$ is nonsingular and the canonical divisor $K_X$ is ample then the equality in the main theorem holds if and only if $X$ is a ball quotient. We guess that such a ball quotient with those invariants exists.  For the parallel case of surfaces with the maximal canonical degree, the surface of general type with canonical degree $36$ does exist which was constructed as some fake projective plane by S. Yeung recently (\cite{Yeung}).
\end{remark}

\section{Canonical maps defined by abelain covers}
The theory of cyclic covers of algebraic surfaces was studied first
by Comessatti in \cite{Com}. Then F. Catanese (\cite{Cat2}) studied
smooth abelian covers in the case $(\mathbb{Z}_2)^{\oplus2}$ and R.
Pardini (\cite{Par2}) analyzed the general case. In this section, we
shall recall some basic definitions and results for abelian covers
and construct minimal $3$-folds of general type whose canonical maps are abelian covers over $\mathbb{P}^3$. Since our point of
view is to find the defining equations, we use the second author's
 expressions and notations appearing
in \cite{Gao}.

Let $\varphi:X\to Y$ is an abelian cover associated to abelian group
$G\cong\mathbb Z_{n_1}\oplus\cdots\oplus\mathbb Z_{n_k}$, i.e.,
function field $\mathbb C(X)$ of $X$ is an abelian extension of the
rational function field $\mathbb C(Y)$ with Galois group $G$.  Without lose of
generality, we can assume $n_1|n_2\cdots|n_k$.

\begin{definition}
The dates of abelian cover over $Y$ with group $G$ are $k$ effective
divisors $D_1$, $\cdots$, $D_k$ and $k$ linear equivalent relations
\[D_1\sim n_1L_1, \cdots, D_k\sim n_kL_k.\]
\end{definition}

Let $\mathscr{L}_i=\mathscr{O}_Y(L_i)$ and $f_i$ be the defining
equation of $D_i$, i.e., $D_i=\text{div}(f_i)$, where $f_i\in H^0(Y,
\mathscr{L}_i^{n_i})$. Denote
$\textbf{V}(\mathscr{L}_i)=\textbf{Spec}S(\mathscr{L}_i)$ to be the
line bundle corresponding to $\mathscr{L}_i$, where
$S(\mathscr{L}_i)$ is the sheaf of symmetric $\mathscr{O}_Y$
algebra. Let $z_i$ be the fiber coordinate of
$\textbf{V}(\mathscr{L}_i)$. Then the abelian cover can be realized
by the normalizing of surface $V$ defined by the system of equations
\[z_1^{n_1}=f_1, \cdots, z_k^{n_k}=f_k.\]
So we have the following diagram:

\begin{diagram}
X     & \rTo^{\text{normalization}}   & V & \rdTo^f \rInto  &\oplus_{i=1}^k\textbf{V}(\mathscr{L}_i)\\
      & \rdTo(4,2)_\varphi            &   & \rdTo           &\dTo^p\\
      &                               &   &                 &Y .\\
\end{diagram}
Sometimes we call $X$ is defined by these equations if there is no
confusions in the context.

 We summerize our main results as follows.
\begin{theorem}\label{F}(See \cite{Gao}) Denote by $[Z]$ the integral part of a
$\mathbb Q$-divisor $Z$, $-L_g=-\sum\limits_{i=1}^k g_i{L}_i+
\left[\sum\limits_{i=1}^{k}\frac{g_i}{n_i}D_i\right]$.Then
\begin{eqnarray}
\varphi_*\mathcal{O}_X&=\bigoplus\limits_{{g\in G}}\mathcal
O_Y(-L_g).
\end{eqnarray}
where $g=(g_1,\cdots, g_k)\in G$.

\end{theorem}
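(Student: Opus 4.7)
The plan is to exploit the Galois $G$-action on $X$ to decompose $\varphi_{*}\mathcal{O}_{X}$ into eigensheaves indexed by characters of $G$, and then identify each such eigensheaf by a local computation on the normalization.

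First I would note that since $\varphi\colon X\to Y$ is a Galois cover with group $G$ in characteristic zero, the sheaf $\varphi_{*}\mathcal{O}_{X}$ carries a natural $G$-action and splits canonically as
\[\varphi_{*}\mathcal{O}_{X}=\bigoplus_{g\in G}\mathcal{F}_{g},\]
where, using the self-duality of $G$ induced by the fixed splitting $G=\mathbb{Z}_{n_{1}}\oplus\cdots\oplus\mathbb{Z}_{n_{k}}$, each index $g=(g_{1},\ldots,g_{k})$ corresponds to the $\chi_{g}$-eigenspace. Away from the branch locus, $\varphi$ is \'etale so $\varphi_{*}\mathcal{O}_{X}$ is locally free of rank $|G|$ and the summands $\mathcal{F}_{g}$ have generic rank one; since $Y$ is smooth and each $\mathcal{F}_{g}$ is torsion-free, they are all invertible.

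Next I would compute $\mathcal{F}_{g}$ explicitly inside the constant sheaf $K(X)$. A rational $\chi_{g}$-eigenfunction on $X$ is necessarily of the form $\phi\cdot z_{1}^{g_{1}}\cdots z_{k}^{g_{k}}$ with $\phi\in K(Y)$, where the $z_{i}$ are the fiber coordinates satisfying $z_{i}^{n_{i}}=f_{i}$. Since $(z_{i})_{X}=(1/n_{i})\varphi^{*}(D_{i})$ as a $\mathbb{Q}$-divisor, working locally near a branch component and using the standard local model for the normalization of $z^{n}=f$, regularity of $\phi\cdot z_{1}^{g_{1}}\cdots z_{k}^{g_{k}}$ on $X$ translates into the numerical condition
\[(\phi)+\sum_{i=1}^{k}g_{i}L_{i}-\left[\sum_{i=1}^{k}\frac{g_{i}}{n_{i}}D_{i}\right]\geq 0\]
on $Y$. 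This identifies $\mathcal{F}_{g}$ with $\mathcal{O}_{Y}(-L_{g})$ for $-L_{g}=-\sum g_{i}L_{i}+\left[\sum (g_{i}/n_{i})D_{i}\right]$, which is exactly the claimed formula.

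The main obstacle is the round-down step: one has to show rigorously that passing from the fractional $\mathbb{Q}$-divisor $\sum (g_{i}/n_{i})D_{i}$ to its integer part correctly absorbs the contribution of the normalization at each branch divisor, and in particular controls what happens when components appear in more than one $D_{i}$. This is exactly the local analysis carried out in detail in \cite{Gao} (and, in the simple normal crossings case, equivalent to Pardini's formulas in \cite{Par2}); once that local statement is in hand, the global decomposition follows by patching.
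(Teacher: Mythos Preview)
Your proposal is correct and follows the standard route: decompose $\varphi_{*}\mathcal{O}_{X}$ into character eigensheaves using the $G$-action, then identify each eigensheaf as a line bundle by a local computation on the normalization, with the integral part arising from the regularity condition along the branch divisors. This is essentially the argument of \cite{Gao} (and, in the simple normal crossings setting, of Pardini \cite{Par2}).

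Note, however, that the paper itself does not supply a proof of this theorem at all: it is stated with the attribution ``(See \cite{Gao})'' and no proof environment follows. So there is no in-paper argument to compare against; your sketch is simply a correct outline of the cited result.
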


 So the
decomposition of $\varphi_*\mathcal{O}_X$ is totally determined by
the abelian cover.

\begin{corollary}\label{h}
If $X$ is non-singular, $D$ is the divisor on $Y$, then
$$h^i(X, \varphi^*\mathcal{O}_Y(D))=\sum\limits_{g\in G}h^i(Y, \mathcal{O}_Y(D-L_g))$$
\end{corollary}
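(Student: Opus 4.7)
The plan is to deduce the corollary from Theorem \ref{F} by combining the projection formula with the vanishing of higher direct images under a finite morphism. Since $\varphi : X \to Y$ is an abelian cover, it is a finite (in particular, affine) morphism, so $R^i\varphi_*\mathcal{F} = 0$ for all $i > 0$ and every quasi-coherent sheaf $\mathcal{F}$ on $X$. Consequently, the Leray spectral sequence for $\varphi$ degenerates at $E_2$ and gives $H^i(X,\mathcal{F}) \cong H^i(Y,\varphi_*\mathcal{F})$ for all $i$.

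First I would apply this to $\mathcal{F} = \varphi^*\mathcal{O}_Y(D)$. By the projection formula (valid here because $\mathcal{O}_Y(D)$ is locally free),
\[
\varphi_*\bigl(\varphi^*\mathcal{O}_Y(D)\bigr) \;\cong\; \mathcal{O}_Y(D)\otimes \varphi_*\mathcal{O}_X.
\]
Next I would substitute the decomposition provided by Theorem \ref{F}. Since tensoring with the line bundle $\mathcal{O}_Y(D)$ is exact and commutes with direct sums,
\[
\mathcal{O}_Y(D)\otimes \bigoplus_{g\in G}\mathcal{O}_Y(-L_g) \;\cong\; \bigoplus_{g\in G}\mathcal{O}_Y(D-L_g).
\]
Taking cohomology on $Y$ and using that $H^i$ commutes with finite direct sums would yield
\[
h^i\bigl(X,\varphi^*\mathcal{O}_Y(D)\bigr) \;=\; h^i\bigl(Y,\varphi_*\varphi^*\mathcal{O}_Y(D)\bigr) \;=\; \sum_{g\in G} h^i\bigl(Y,\mathcal{O}_Y(D-L_g)\bigr),
\]
which is the desired identity.

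There is no serious obstacle here; the corollary is essentially a packaging of Theorem \ref{F} once one recalls the two standard tools (projection formula plus vanishing of $R^i\varphi_*$ for finite $\varphi$). The only point worth checking is that the decomposition of $\varphi_*\mathcal{O}_X$ in Theorem \ref{F} is a decomposition of $\mathcal{O}_Y$-modules (so that tensoring with $\mathcal{O}_Y(D)$ distributes across the $G$-eigenspaces); this is clear from the construction of abelian covers, where the summands $\mathcal{O}_Y(-L_g)$ are precisely the character eigensheaves of the $G$-action on $\varphi_*\mathcal{O}_X$.
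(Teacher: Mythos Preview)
Your proof is correct and is exactly the standard argument one expects here: finiteness of $\varphi$ kills the higher direct images, the projection formula identifies $\varphi_*\varphi^*\mathcal{O}_Y(D)$ with $\mathcal{O}_Y(D)\otimes\varphi_*\mathcal{O}_X$, and then Theorem~\ref{F} splits this into the line bundles $\mathcal{O}_Y(D-L_g)$. The paper itself gives no proof for this corollary, treating it as an immediate consequence of Theorem~\ref{F}; your write-up simply makes the implicit steps explicit.
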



If the canonical map of $X$ is an abelian cover over $\mathbb{P}^3$ then we have the explicit decomposition of $\varphi_*\mathcal{O}_X$.

\begin{lemma}\label{decom} If $\varphi=\phi_{X}$
is a finite abelian cover of degree $d$ over $\mathbb{P}^3$, then
$\varphi_*\mathcal{O}_X=\mathcal{O}_{\mathbb{P}^3}\oplus\mathcal{O}_{\mathbb{P}^3}(-2)^{\oplus
d/2-1}\oplus\mathcal{O}_{\mathbb{P}^3}(-3)^{\oplus
d/2-1} \oplus\mathcal{O}_{\mathbb{P}^3}(-5)$.
\end{lemma}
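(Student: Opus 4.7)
The plan is to determine the twists in the decomposition of $\varphi_*\mathcal{O}_X$ by comparing two different expressions for $\varphi_*\omega_X$, and then to pin down the multiplicities using $p_g(X)=4$.

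By Theorem~\ref{F}, the pushforward takes the form $\varphi_*\mathcal{O}_X=\bigoplus_{g\in G}\mathcal{O}_{\mathbb{P}^3}(-a_g)$ for some integers $a_g$, with $a_0=0$ and $a_g\ge 1$ for $g\ne 0$ (the latter because $X$ is connected, so the only global sections of $\varphi_*\mathcal{O}_X$ are scalars). Since $\varphi=\phi_X$ is the canonical morphism of $X$ with image $\mathbb{P}^3$, the linear system $|K_X|$ must be base-point-free, which gives $\varphi^*\mathcal{O}_{\mathbb{P}^3}(1)=\omega_X$ and $p_g(X)=4$. Grothendieck duality for the finite morphism $\varphi$ then yields
\[
\varphi_*\omega_X=\mathcal{H}om_{\mathcal{O}_{\mathbb{P}^3}}(\varphi_*\mathcal{O}_X,\omega_{\mathbb{P}^3})=\bigoplus_{g\in G}\mathcal{O}_{\mathbb{P}^3}(a_g-4),
\]
while the projection formula gives
\[
\varphi_*\omega_X=\varphi_*\varphi^*\mathcal{O}_{\mathbb{P}^3}(1)=\varphi_*\mathcal{O}_X\otimes\mathcal{O}_{\mathbb{P}^3}(1)=\bigoplus_{g\in G}\mathcal{O}_{\mathbb{P}^3}(1-a_g).
\]
Comparing these two decompositions forces the multiset $\{a_g\}_{g\in G}$ to be invariant under the involution $a\mapsto 5-a$; in particular $0\le a_g\le 5$ for every $g$.

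Taking global sections of $\omega_X$ and using the projection formula once more gives
\[
4=p_g(X)=\sum_{g\in G}h^0\bigl(\mathbb{P}^3,\mathcal{O}_{\mathbb{P}^3}(1-a_g)\bigr)=4\cdot\#\{g:a_g=0\}+\#\{g:a_g=1\}=4+\#\{g:a_g=1\},
\]
so no $a_g$ equals $1$, and by the symmetry $a\mapsto 5-a$ no $a_g$ equals $4$ either. Hence the multiset $\{a_g\}$ consists of one $0$, $n_2$ copies of $2$, $n_3$ copies of $3$, and one $5$; the symmetry forces $n_2=n_3$, and the total count $2+n_2+n_3=d$ then yields $n_2=n_3=d/2-1$, which is precisely the stated decomposition.

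The main subtlety is the identification $\varphi_*\omega_X=\mathcal{H}om(\varphi_*\mathcal{O}_X,\omega_{\mathbb{P}^3})$ via Grothendieck duality for a finite morphism, together with the observation that the hypothesis ``$\varphi$ is a finite abelian cover'' implicitly requires $|K_X|$ to be base-point-free so that $\varphi^*\mathcal{O}_{\mathbb{P}^3}(1)=\omega_X$. Once these are in place, the entire argument reduces to a clean bookkeeping of line-bundle twists on $\mathbb{P}^3$.
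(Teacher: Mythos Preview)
Your proof is correct and takes a genuinely different route from the paper's. The paper argues as follows: after writing $\varphi_*\mathcal{O}_X=\mathcal{O}_{\mathbb{P}^3}\oplus\bigoplus_i\mathcal{O}_{\mathbb{P}^3}(-l_i)$, it uses $p_g=4$ (computed as $h^0(\varphi^*\mathcal{O}(1))$) to get $l_i\ge 2$, and then uses $p_g=h^3(\varphi_*\mathcal{O}_X)$ to bound $l_i\le 5$ and reduce to two combinatorial cases. To finish, it computes the plurigenera $P_2(X)$ and $P_3(X)$ via Riemann--Roch for threefolds (using that $h^1(\mathcal{O}_X)=h^2(\mathcal{O}_X)=0$, hence $\chi(\omega_X)=p_g-1=3$), and matches these against $\sum_i h^0(\mathcal{O}_{\mathbb{P}^3}(m-l_i))$ to count the number of $l_i$ equal to $2$ and to eliminate the second case.

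Your argument replaces all of the Riemann--Roch bookkeeping by a single structural observation: Grothendieck duality for the finite map gives $\varphi_*\omega_X\cong\mathcal{H}om(\varphi_*\mathcal{O}_X,\omega_{\mathbb{P}^3})$, while the projection formula gives $\varphi_*\omega_X\cong\varphi_*\mathcal{O}_X\otimes\mathcal{O}(1)$, and comparing the two forces the multiset $\{a_g\}$ to be symmetric under $a\mapsto 5-a$. Combined with $a_g\ge 1$ for $g\ne 0$ and the single $p_g$ computation, this pins down the decomposition immediately. This is cleaner and more conceptual than the paper's approach; it avoids invoking $P_2$, $P_3$, the formula $K_Xc_2=-24\chi(\mathcal{O}_X)$, and the case analysis. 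The paper's method, on the other hand, is more elementary in that it only uses cohomology of line bundles on $\mathbb{P}^3$ and Riemann--Roch, without appealing to relative duality.
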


\begin{proof}
Because $\varphi$ is a finite abelian cover, $\varphi_*\mathcal{O}_X$ is a
direct sum of the line bundles by Theorem \ref{F}.
$$\varphi_*\mathcal{O}_X=\mathcal{O}_{\mathbb{P}^3}\oplus
\bigoplus_{i=1}^{d-1}\mathcal{O}_{\mathbb{P}^3}(-l_i).$$

 Assume
$0<l_{d-1}\leqslant l_{d-2} \leqslant \cdots \leqslant l_1 $.

Since
$K_X=\varphi^*(\mathcal{O}_{\mathbb{P}^3}(1))$, for any $m\geqslant
1$,
$$P_m(X)=h^0(mK_X)=h^0(\varphi^*(\mathcal{O}_{\mathbb{P}^3}(m)))
=h^0(\mathcal{O}_{\mathbb{P}^3}(m))+
\sum_{i=1}^{d-1}h^0(\mathcal{O}_{\mathbb{P}^3}(m-l_i)).$$

Because
$p_g(X)=h^0(\varphi^*(\mathcal{O}_{\mathbb{P}^3}(H)))=h^0(\mathcal{O}_{\mathbb{P}^3}(1))=4$,
we see that
$$h^0(\mathcal{O}_{\mathbb{P}^3}(1-l_i))=0, \quad 1\le i \le d-1.$$

So $l_i\geqslant 2$.

And $p_g=h^3(\varphi_*\mathcal{O}_X)=h^3(\mathcal{O}_{\mathbb{P}^3})
+\sum_{i=1}^{d-1}h^3(\mathcal{O}_{\mathbb{P}^3}(-l_i))$,

So $4=\sum_{i=1}^{d-1}h^0(\mathcal{O}_{\mathbb{P}^3}(l_i-4))$, then
$l_i \leqslant 5$.

Therefore, we have two cases as follows:

\begin{itemize}
\item[(1)]$l_{1}=5,\quad l_2, \cdots, l_{d-1}<4$, and
\item[(2)]$l_{1}=l_{2}=l_{3}=l_{4}=4,\quad l_5, \cdots, l_{d-1}<4$.
\end{itemize}
Let $m=2$, we have the second plurigenus of $X$
$P_2(X)=\chi(2K_X)=\frac{1}{2}K_X^3+\frac{1}{6}K_Xc_2+\chi(\mathcal{O}_X)=\frac{d}{2}+3\chi(K_X)=\frac{d}{2}+9$. On the other hand, $P_2(X)=h^0(\mathcal{O}_{\mathbb{P}^3}(2))+
\sum_{i=1}^{d-1}h^0\big(\mathcal{O}_{\mathbb{P}^3}(2-l_i)\big)$. So
$$
\sum_{i=1}^{d-1}h^0(\mathcal{O}_{\mathbb{P}^3}(2-l_i))=\frac{d}{2}-1,$$
then there are exact $(\frac{d}{2}-1)$ $2$'s among $l_i$'s.

Let $m=3$, we have the third plurigenus of $X$
$P_3(X)=\chi(3K_X)=\frac{5}{2}K_X^3+\frac{1}{4}K_Xc_2+\chi(\mathcal{O}_X)=\frac{5d}{2}+5\chi(K_X)=\frac{5d}{2}+15$. On the other hand, $P_3(X)=h^0(\mathcal{O}_{\mathbb{P}^3}(3))+
\sum_{i=1}^{d-1}h^0\big(\mathcal{O}_{\mathbb{P}^3}(3-l_i)\big)$. So
$$
\sum_{i=1}^{d-1}h^0(\mathcal{O}_{\mathbb{P}^3}(3-l_i))=\frac{5d}{2}-5,$$
The second case does not satisfy the equation. So the lemma is
proved.
\end{proof}

Now let $\varphi: X\rightarrow \mathbb{P}^3$ be an abelian cover
associated to an abelian group $G\cong\mathbb
Z_{n_1}\oplus\cdots\oplus\mathbb Z_{n_k}$. Then $X$ is the
normalization of the $3$-fold defined by
\[z_1^{n_1}=f_1=\prod_\alpha p_\alpha^{\alpha_1}, \cdots, z_k^{n_k}=f_k=\prod_\alpha p_\alpha^{\alpha_k},\]
where $p_{\alpha}$'s are coprime and $\alpha=(\alpha_1,\cdots,
\alpha_k)\in G$. Denote $x_\alpha$ to be the degree of $p_\alpha$,
$e_i=(0,\cdots,0,1,0,\cdots,0)\in G$, $1\leq i \leq k$, and $l_g$ be
the degree of $L_g$, $g\in G$. So $x_\alpha$ and $l_g$ are all integers. Then
\begin{eqnarray}\label{s}
&n_il_{e_i}=\sum\limits_\alpha \alpha_ix_\alpha\quad i=1,\cdots k,\\
& l_g=\sum\limits_{i=1}^k g_i
l_{e_i}-\sum\limits_\alpha\left[\sum\limits_{i=1}^k
\frac{\displaystyle{g_i\alpha_i}}{\displaystyle
 {n_i}}\right]x_\alpha.
 \end{eqnarray}

 \begin{lemma}{\label{jfc}}
Using the notation as above, if $\varphi=\phi_{X}$, then there
exists $g'=(g'_1,\cdots, g'_k)\in G\cong\mathbb
Z_{n_1}\oplus\cdots\oplus\mathbb Z_{n_k}$ and a partition of $G$ set-theoradically, $G=\{0\}\cup\{g'\}\cup S_1\cup S_2$, where the cardinalities of $S_1$ and $S_2$ are equal, such that $x_{\alpha}$
satisfies the following equation
$${\LARGE{(*)}}\quad\quad\left\{\begin{array}{l}
n_il_{e_i}=\sum\limits_\alpha \alpha_ix_\alpha\\
l_{g'}=\sum\limits_{i=1}^k g'_i
l_{e_i}-\sum\limits_\alpha\left[\sum\limits_{i=1}^k
\frac{\displaystyle{g'_i\alpha_i}}{\displaystyle
 {n_i}}\right]x_\alpha=5 \hspace{1cm} \\
 l_g=\sum\limits_{i=1}^k g_i
l_{e_i}-\sum\limits_\alpha\left[\sum\limits_{i=1}^k
\frac{\displaystyle{g_i\alpha_i}}{\displaystyle
 {n_i}}\right]x_\alpha=3, \quad \quad g\in S_1\\
l_g=\sum\limits_{i=1}^k g_i
l_{e_i}-\sum\limits_\alpha\left[\sum\limits_{i=1}^k
\frac{\displaystyle{g_i\alpha_i}}{\displaystyle
 {n_i}}\right]x_\alpha=2, \quad \quad g\in S_2
\end{array}\right.$$
\end{lemma}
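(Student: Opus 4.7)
The plan is to combine Lemma~\ref{decom} with the explicit degree formula coming from Theorem~\ref{F}. By Theorem~\ref{F} one has $\varphi_*\mathcal{O}_X=\bigoplus_{g\in G}\mathcal{O}_{\mathbb{P}^3}(-L_g)$, and taking degrees on $\mathbb{P}^3$ of the relation $L_g=\sum_i g_iL_i-\bigl[\sum_i (g_i/n_i)D_i\bigr]$ gives
\[
l_g=\sum_{i=1}^{k}g_il_{e_i}-\sum_{\alpha}\Bigl[\sum_{i=1}^{k}\frac{g_i\alpha_i}{n_i}\Bigr]x_\alpha,
\]
which is already recorded in (\ref{s}). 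Thus the last three lines of $(*)$ are nothing but this formula evaluated at suitable group elements, and the first line of $(*)$ is the degree version of the linear equivalence $D_i\sim n_iL_i$: since $f_i=\prod_{\alpha}p_\alpha^{\alpha_i}$ with $\deg p_\alpha=x_\alpha$, we have $\deg D_i=\sum_\alpha \alpha_i x_\alpha=n_il_{e_i}$.

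The substantive step is then to match summands one-by-one with the decomposition in Lemma~\ref{decom}. For $g=0$ one computes $L_0=0$, hence $l_0=0$, which accounts for the trivial $\mathcal{O}_{\mathbb{P}^3}$-factor. Lemma~\ref{decom} tells us that among the remaining $d-1$ values of $l_g$ there is exactly one~$5$, exactly $d/2-1$ threes, and exactly $d/2-1$ twos. I would therefore take $g'$ to be the unique element of $G$ with $l_{g'}=5$, and define $S_1:=\{g\in G : l_g=3\}$ and $S_2:=\{g\in G : l_g=2\}$. The equality $|S_1|=|S_2|=d/2-1$ and the disjoint decomposition $G=\{0\}\cup\{g'\}\cup S_1\cup S_2$ then follow directly from Lemma~\ref{decom}.

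There is no real obstacle: the lemma is essentially a repackaging of Lemma~\ref{decom} in terms of the integer invariants $(x_\alpha,l_{e_i})$ that parameterize the building data of the abelian cover. The only thing requiring care is the bookkeeping that $l_0=0$ and that the three level sets $\{l_g=5\}$, $\{l_g=3\}$, $\{l_g=2\}$ really partition $G\setminus\{0\}$ with the predicted cardinalities; both facts are immediate from the enumeration already carried out in the proof of Lemma~\ref{decom}.
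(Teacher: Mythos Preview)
Your argument is correct and is exactly the paper's approach: the paper's proof consists of the single sentence ``It comes from Lemma~\ref{decom} directly,'' and what you have written is precisely the unpacking of that sentence via Theorem~\ref{F} and the degree formulas~(\ref{s}).
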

\begin{proof} It comes from Lemma \ref{decom} directly.

\end{proof}
By the above lemma, finding $3$-folds whose canonical map are abelian
covers over $\mathbb{P}^3$ is equivalent to finding the integral
roots $\{x_\alpha\}$ of the above equations.

\begin{theorem}
Let $X$ be a Gorenstein minimal complex projective 3-fold of general type with locally factorial terminal singularities and $\varphi: X\longrightarrow\mathbb{P}^3$ is an abelian cover.
If $\varphi=\phi_{X}$ then the canonical degree can only be $2^m$, $1\leqslant m \leqslant 5$.
\end{theorem}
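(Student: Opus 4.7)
The plan is to apply Lemma \ref{jfc}, which reduces the problem to solving the linear system $(*)$ in non-negative integers $\{x_\alpha\}$, then to show that $G$ must be an elementary abelian $2$-group, and finally to bound its rank by a Fourier argument on $(\mathbb{Z}/2)^m$. Note that by Lemma \ref{decom} the canonical degree $d=|G|$ is already forced to be even.

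The first step is to establish $G\cong(\mathbb{Z}/2)^m$. The main tools are the subadditivity
\[
l_{g_1+g_2}\le l_{g_1}+l_{g_2}
\]
from Pardini's abelian cover theory (the divisor $L_{g_1}+L_{g_2}-L_{g_1+g_2}$ being effective) together with the symmetry identity
\[
l_g+l_{-g}\;=\;\sum_{\alpha\,:\,\sum_i g_i\alpha_i/n_i\,\notin\,\mathbb{Z}} x_\alpha,
\]
obtained directly from the formula in $(*)$. For any hypothetical element $h\in G$ of order $n>2$, the cyclic subgroup $\langle h\rangle$, together with uniqueness of $g'$, yields a chain of constraints (for instance $2l_h\ge l_{2h}=5$ forces $l_h=3$ whenever $2h=g'$) whose combination with non-negativity of $\{x_\alpha\}$ is unsolvable. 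Handling $\mathbb{Z}/n$ for $n=3,4,6,\dots$ in turn and then arbitrary finite abelian $G$ via primary decomposition rules out every group that is not elementary abelian of exponent $2$.

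Once $G=(\mathbb{Z}/2)^m$, the formula in $(*)$ collapses to $l_g=\tfrac12\sum_{\alpha\,:\,g\cdot\alpha\,\text{odd}}x_\alpha$, where $g\cdot\alpha:=\sum_i g_i\alpha_i$. After a basis change making $g'=e_1$, the relation $2l_{e_1}=10$ gives $\sum_{\alpha_1=1}x_\alpha=10$, and a short character-sum computation using $\sum_{g\neq 0}l_g=5\cdot 2^{m-1}$ gives $\sum_\alpha x_\alpha=10$, hence $x_\alpha=0$ whenever $\alpha_1=0$. Setting $y_{\tilde\alpha}:=x_{(1,\tilde\alpha)}$ for $\tilde\alpha\in(\mathbb{Z}/2)^{m-1}$, the remaining constraints $l_{(g_1,\tilde g)}\in\{2,3\}$ for $\tilde g\neq 0$ translate to $\hat y(\tilde g)=\pm 2$ for every $\tilde g\neq 0$ and $\hat y(0)=10$. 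Parseval then yields
\[
2^{m-1}\sum_{\tilde\alpha}y_{\tilde\alpha}^{\,2}\;=\;|\hat y(0)|^2+\sum_{\tilde g\neq 0}|\hat y(\tilde g)|^2\;=\;100+4\bigl(2^{m-1}-1\bigr),
\]
so $\sum_{\tilde\alpha}y_{\tilde\alpha}^{\,2}=4+96/2^{m-1}$. Combined with the integer inequality $\sum y_{\tilde\alpha}^{\,2}\ge\sum y_{\tilde\alpha}=10$ (valid for $y_{\tilde\alpha}\in\mathbb{Z}_{\ge 0}$), this forces $96/2^{m-1}\ge 6$, that is $m\le 5$.

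For the existence half, for each $m=1,\dots,5$ one exhibits an explicit non-negative integer $y$ of $\ell^1$-norm $10$ with the required Fourier spectrum $\{\pm 2\}$ (for $m=5$ the saturated equality $\sum y^{\,2}=\sum y=10$ forces $y\in\{0,1\}^{16}$, a well-known combinatorial design on $(\mathbb{Z}/2)^4$), and then lifts to generic polynomials $p_\alpha$ on $\mathbb{P}^3$ of the prescribed degrees $x_\alpha$, producing the canonical cover of degree $2^m$. The main obstacle is the first step: eliminating every $G$ containing an element of order $>2$ is the lengthiest part, since it requires revisiting the combinatorics of $(*)$ for each relevant cyclic subgroup; once that is in place, the Parseval argument of step two closes the bound $m\le 5$ cleanly.
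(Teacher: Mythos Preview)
Your Parseval argument in the second step is correct and elegant: once $G\cong(\mathbb{Z}/2)^m$, the identity $l_g=\tfrac12\sum_{g\cdot\alpha\text{ odd}}x_\alpha$, the reduction to $y_{\tilde\alpha}=x_{(1,\tilde\alpha)}$, and the Parseval computation do force $m\le 5$ exactly as you say.

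The genuine gap is in the first step. You claim that the subadditivity $l_{g_1+g_2}\le l_{g_1}+l_{g_2}$ together with the symmetry relation and non-negativity of the $x_\alpha$ suffice to show that the system $(*)$ has no solution whenever $G$ contains an element of order~$>2$. This is false. As the authors note in the Remark immediately following the theorem, the system $(*)$ \emph{does} admit non-negative integral solutions for $|G|=6$ and $|G|=18$; what excludes these degrees is not the combinatorics of $(*)$ but the hypothesis that $X$ have at worst locally factorial (Gorenstein) terminal singularities---the covers arising from those solutions acquire isolated singularities which are not cDV. Your sketch (``handling $\mathbb{Z}/n$ for $n=3,4,6,\dots$ in turn'') therefore cannot go through as stated, and any attempt to run it will find the asserted contradiction does not materialise for $n=3$ or $n=6$. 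To repair the argument you must bring in the singularity hypothesis at this stage, analysing the local form of the cover over the branch locus for the candidate non-$2$-group solutions.

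A second, smaller gap is in the existence half: exhibiting branch data $\{x_\alpha\}$ with the correct Fourier spectrum is not enough---you must also check that the normalised cover with generic $p_\alpha$ of those degrees is nonsingular (or at least has the prescribed singularities). The paper carries out this verification explicitly for each $m$, and it is not automatic.

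By contrast, the paper's proof proceeds by direct computer search over all abelian groups and all solutions of $(*)$, finds solutions precisely for $d\in\{2,4,6,8,16,18,32\}$, and then separates the $2$-power degrees from $6,18$ by the local smoothness/cDV analysis. Your Fourier bound would be a pleasant conceptual replacement for the computer search \emph{within} the $2$-group case, but it cannot replace the singularity check that rules out $d=6,18$.
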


\begin{proof}
By Lemma \ref{jfc}, we only need to find the integral
solutions of the equations (*). So the only possible degrees are $2, 4, 8, 16$ and $32$ by using computer calculations.
Moreover, we have the defining equations of the examples of all the degrees as follows.

Degree $2$: $$z^2=f;$$

Degree $4$:
$$\left\{\begin{array}{l}
z_1^2=s\\
z_2^2=q;
\end{array}\right.$$

Degree $8$:

$$\left\{\begin{array}{l}
z_1^2=t_1q\\
z_2^2=t_2q\\
z_3^2=t_3q;
\end{array}\right.$$

Degree $16$:
$$\left\{\begin{array}{l}
z_1^2=h_1h_4t_1t_2\\
z_2^2=h_2h_4t_2t_3\\
z_3^2=h_3h_4t_1t_3\\
z_4^2=h_2h_3t_3;
\end{array}\right.$$


Degree $32$:
$$\left\{\begin{array}{l}
z_1^2=h_{1}h_{2}h_{3}h_{10}\\
z_2^2=h_{4}h_{5}h_{6}h_{10}\\
z_3^2=h_{2}h_{3}h_{6}h_{7}\\
z_4^2=h_{1}h_{3}h_{5}h_{8}\\
z_5^2=h_{7}h_{8}h_{9}h_{10};
\end{array}\right.$$
where the degree of $h$'s is $1$, $t$'s is $2$, $q$'s is $4$, $s$'s is $6$, $f$'s is $10$ and they all define nonsingular surfaces in $\mathbb{P}^3$ and intersect normal crossingly.

We want to show these 3-folds are smooth after normalization.
Since the arguments are similar, we only prove the most complicated case with canonical degree $32$.

Actually, we only need to consider the intersections of the branch locus locally.
Let $\ell_{ij}$'s be the intersection lines of $h_i$ and $h_j$. Around the general point of $\ell_{ij}$ (except the intersection of three planes), the
cover is locally defined by \[z_1^2=x^{a_{11}}y^{a_{12}}, \quad
z_2^2=x^{a_{21}}y^{a_{22}}, \quad z_3^2=x^{a_{31}}y^{a_{32}},\quad
z_4^2=x^{a_{41}}y^{a_{42}},\] where $a_{ij}=0$ or $1$ for all $i$,
$j$.

It is easy to check that $\{(a_{i1}, a_{i2})\}\nsubseteqq
\{(1,1),(0,0)\}$ i.e., at least one pair $\{(a_{i1},
a_{i2})\}=\{(1,0)\}$ or $\{(0, 1)\}.$  Without lose of generality,
we can assume $(a_{11},a_{21})=(1,0)$, i.e. $z_1^2=x$. After normalization, the cover is
branched along the smooth surfaces. So the 3-fold is smooth at the preimages of the general points of $\ell_{ij}$'s under the normalization map.

Let $p_{ijk}$ be the intersection point of $h_i$, $h_j$ and $h_k$. The arguments are similar. Let us take $p_{123}$ for example.  The
cover is locally defined by

\[z_1^2=xyw, \quad z_3^2=yw,\quad
z_4^2=w.\]

After normalization, the cover
is locally defined by

\[z_1^2=x, \quad z_3^2=y,\quad
z_4^2=w.\]

So the 3-folds are smooth.

It is easy to see that these $3$-folds are all nonsingular
with $p_g(X)=4$, $q(X)=h^{2,0}=0$, $\chi(\mathscr{O}_X)=-3$, $K_X=\varphi^*(\mathscr{O}_{\mathbb{P}^3}(1))$ and $K_X^3$ equals each degree of the covers.
\end{proof}

\begin{remark}
By Lemma \ref{jfc}, we have the integral
solution of the equations (*) for degree $6$ and $18$. But the isolated singularities of corresponding $3$-fold are not Gorenstein terminal since they are not cDV (\cite{Rei2}).
\end{remark}

\section*{Acknowledgements}
Both authors would like to thank N. Mok for
supporting their researches when they were in the University of Hong Kong and the referees for the useful comments.


\begin{thebibliography}{EVW}


\bibitem [Bea]{Bea} Beauville, A.: \textit{L'application canonique pour les surfaces de type g$\acute{e}$n$\acute{e}$ral},
Invention Math. \textbf{55}, 121-140(1979).

\bibitem [Cai]{Cai} Cai, J.: \text{Degree of the canonical map of a Georenstein 3-fold of general type}, Proc. AMS, \textbf{136}, (2008), No. 5, 1565-1574.

\bibitem [Cas]{Cas} Casnati, G.: \textit{Covers of algebraic varieties. II. Covers of degree 5 and construction of surfaces.}, J. Algebraic Geom. \textbf{5} (1996), no. 3, 461-477.

\bibitem [Cat1]{Cat1} Catanese, F.: \textit{Babbage's conjecture, contact of surfaces, symmetric determinatal varieties and
applications}, Invent. Math. \textbf{63} (1981), 433-465.

\bibitem [Cat2]{Cat2} Catanese, F.: \textit{On the moduli spaces of surfaces of general type}, J.
Differential Geom., \textbf{19} (1984), 483-515.

\bibitem [Ch]{Ch} Chen, M.: \textit{Weak Boundedness Theorems for Canonically Fibered Gorenstein Minimal 3-Folds}, Proc. Amer. Math. Soc., \textbf{133} (2005), No. 5, 1291-1298.

\bibitem [C-H]{C-H} Chen, M.; Hacon, C.:\textit{ On the geography of Gorenstein minimal 3-folds of general type},
Asian J. Math. \textbf{10} (2006), No.4, 757-764.


\bibitem [Com]{Com} Comessatti, A.: \textit{Sulle superfici multiple cicliche}, Rend. Sem. Mat.
Univ. Padova, \textbf{1} (1930), 1-45.


\bibitem [D-G]{D-G} Du, R.; Gao, Y.: \textit{Canonical maps of surfaces defined by abelian covers}, Asian J. Math. \textbf{18} (2014), No. 1, 219-228.

\bibitem [Gao]{Gao} Gao, Y.: \textit{A note on finite abelian
cover}, Science in China, Series A: Mathematics \textbf{54} (2011),
1333-1342.

\bibitem [Ha]{Ha} Hacon, C.: \textit{On the degree of the canonical maps of 3-folds}, Proc. Japan Acad. \textbf{80} Ser. A (2004) 166-167.

\bibitem [Mi]{Mi} Miyaoka, Y.:\textit{The Chern classes and Kodaira dimension of a minimal variety}, Algebraic geometry, Sendai, 1985, 449-476, Adv. Stud. Pure Math., \textbf{10}, North-Holland, Amsterdam, 1987.

\bibitem [Par1]{Par1} Pardini, R.: \textit{Canonical images of
surfaces}, J. reine angew. Math. \textbf{417}(1991), 215-219.

\bibitem [Par2]{Par2} Pardini, R.: \textit{Abelian covers of algebraic varieties}, J. Reine Angew. Math.
\textbf{417}(1991), 191-213.

\bibitem [Per]{Per} Persson, U.: \textit{Double coverings and surfaces of general type.}, In: Olson,L.D.(ed.) Algebraic geometry.(Lect. Notes Math., vol.732, pp.168-175)
Berlin Heidelberg New York: Springer 1978.

\bibitem [Rei]{Rei} Reid, M.: \textit{Surfaces with $p_g=0, K^2=2$.} Preprint available at
http://www.warwick.ac.uk/~masda/surf/.

\bibitem[Rei2]{Rei2} Reid, M., \emph{Minimal models of canonical 3-folds}, Algebraic varieties and analytic varieties (Tokyo, 1981), 131šC180,
Adv. Stud. Pure Math., vol. 1, North-Holland, Amsterdam, 1983.

\bibitem [Rit]{Rit} Rito, C.: \textit{New canonical triple covers of surfaces.} Proc. AMS,
Vol. 143, No. 11 (2015), 4647-4653.

\bibitem [Rit2]{Rit2} Rito, C.: \emph{A surface with $q=2$ and canonical map of degree $16$}, arXiv:1506.05987.

\bibitem [Rit3]{Rit3} Rito, C.: \emph{A surface with canonical map of degree $24$}, arXiv:1509.04132.

\bibitem [Tan]{Tan}  Tan, S.-L.: \textit{Surfaces whose canonical maps are of odd degrees}.
Math. Ann. \textbf{292} (1992), no. 1, 13-29.

\bibitem [V-Z]{V-Z} van Der Geer, G., Zagier, D:  \textit{The Hilbert modular group for the field
$\mathbb{Q}[\sqrt{13}]$}, Invent. Math. \textbf{42} (1977), 93-134.

\bibitem [Xiao]{Xiao} Xiao, G.: \textit{Algebraic surfaces with high canonical
degree}, Math. Ann. \textbf{274}(1986), 473-483.

\bibitem [Yeung]{Yeung} Yeung, S.-L.: \textit{A surface of maximal canonical degree}	arXiv:1510.06622




\end{thebibliography}
\end{document}